\documentclass[a4paper,centertags,osxzneside,12pt]{amsart}
\usepackage{mathrsfs}
\usepackage{appendix}
\usepackage{amssymb}
\usepackage{fancyhdr}
\usepackage{charter}
\usepackage{typearea}
\usepackage{pdfsync}
\usepackage{color}
\usepackage[colorlinks,linkcolor=blue,anchorcolor=blue,citecolor=green]{hyperref}
\usepackage{mathrsfs}
\usepackage[a4paper,top=3cm,bottom=3cm,left=3cm,right=3cm]{geometry}

\usepackage{enumitem}
\usepackage{color}
\setlist[1]{itemsep=5pt}
\newcommand{\comment}[1]{}

\makeatletter
      \def\@setcopyright{}
      \def\serieslogo@{}
      \makeatother

\newtheorem{theorem}{Theorem}[section]

\newtheorem{remark}[theorem]{Remark}

\numberwithin{equation}{section}
\begin{document}
\title{A note on csc Bergman metric}
\author{Xiaojun Huang}
\address{Department of Mathematics, Rutgers University, New Brunswick, NJ 08903, USA.}
\email{huangx@math.rutgers.edu}
\thanks{Xiaojun Huang is partially supported  by NSF DMS-2247151}

\author{Xiaoshan Li}
\address{School of Mathematics and Statistics, Wuhan University, Wuhan, Hubei 430072, China.}
\email{xiaoshanli@whu.edu.cn}
\thanks{Xiaoshan Li is supported in part by NSFC (12361131577, 12271411)}
\date{April 2026}
\begin{abstract}
    In this note, we show that if the Bergman metric of a pseudoconvex domain in $\mathbb C^n$($n\geq 3$) has constant scalar curvature, then every strongly pseudoconvex boundary point of the domain is spherical.
\end{abstract}
\maketitle
\section{csc Bergman metric}

Let $\Omega\subset\mathbb C^n$ ($n\geq 1$) be a possibly unbounded pseudoconvex domain containing a smooth strongly pseudoconvex boundary point $p\in\partial\Omega$. Write $\{\phi_j\}$ for an orthonormal basis of the Bergman space $A^2(\Omega)$, the subspace of $L^2(\Omega)$ consisting of $L^2$-integrable holomorphic functions on $\Omega$. Denote by
$
K_{\Omega}(z, z)=\sum_{j}|\phi_j(z)|^2
$
the Bergman kernel function on $\Omega$, and let
\[
g_{\Omega}=\sum_{i,j}g_{i\overline j}\,dz_i\otimes d\overline z_j, \ \ g_{i\overline j}=\frac{\partial^2}{\partial z_i\,\partial \overline z_j}\log K_{\Omega}(z,z)
\]
be the  Bergman metric of $\Omega$. The Bergman metric is well defined on a maximal open subset $\Omega^*\subset \Omega$ that contains a one‑sided neighborhood of $p$ (see~\cite{HJL25}). 

Write $G_{\Omega}:=(g_{i\overline j})_{n\times n}$ and denote
$
J_{\Omega}=\frac{\det G_{\Omega}}{K_{\Omega}},
$ 
called the Bergman canonical invariant function of $\Omega$. 
For the Bergman metric with metric tensor \( g_{i\bar{j}} \), the Ricci tensor is given by
$ 
\mathrm{Ric}_{i\bar{j}}
=
-\,\frac{\partial^2}{\partial z^i \,\partial \bar{z}^j}
\log \det\bigl(g_{k\bar{\ell}}\bigr).
$ 
The scalar curvature is the trace of the Ricci tensor with respect to the metric:
$ 
S_{\Omega}=g^{i\bar{j}} \,\mathrm{Ric}_{i\bar{j}}=
-\,g^{i\bar{j}} \,\frac{\partial^2}{\partial z^i \,\partial \bar{z}^j}
\log \det\bigl(g_{k\bar{\ell}}\bigr).
$ 
It is known that near a strongly pseudoconvex boundary point \cite{KYu96}, the scalar curvature \( S_{\Omega} \) and the invariant \( J_\Omega \) approach the limits \( -n \) and the constant \( c_n=\frac{(n+1)^n\pi^n}{n!} \), respectively. Moreover, the Bergman metric is asymptotically K\"ahler--Einstein with Ricci constant \( -1 \) in the sense that $\mathrm{Ric}_{i\bar{j}}+g_{i\bar{j}}\rightarrow 0$ as $z\rightarrow p$.

Starting from the identity
$ 
\log J_\Omega = \log \det G_\Omega - \log K_\Omega,
$ 
and applying \( \partial_{z_k} \partial_{\overline{z_j}} \), then contracting with \( g^{k\bar{j}} \), we obtain that \( S_{\Omega} \) is constant if and only if \( \log J_\Omega \) is harmonic with respect to the Bergman metric, which was first derived by Sha in  (\cite{S26}):
\[
\Delta_{g_\Omega} \log J_\Omega (z)
=
\sum_{j,k} g^{k\bar{j}}
\frac{\partial^2}{\partial z^k \,\partial \overline{z^j}} \log J_\Omega=0
\quad \text{on } \Omega^*.
\]

When the Bergman metric \( g_\Omega \) has constant scalar curvature on \( \Omega^* \), we say that \( \Omega \) admits a constant scalar curvature ({\bf csc}) Bergman metric.
Since this condition can be expressed as a real-analytic equation on $K(z,z)$ in \( \Omega \), it follows from the uniqueness of real analytic functions  that \( \Omega \) has a csc Bergman metric if and only if \( S_\Omega \) is constant on some nonempty open subset of \( \Omega^* \).

In this note, we present a   proof of the following fact:
\begin{theorem} \label{main}
Let $\Omega
\subset {\mathbb C}^n$ with $n\ge 3$ be a pseudoconvex domain with $p\in \partial\Omega$ a  smooth strongly pseudoconvex  boundary point of $\Omega$. If $\Omega$  has  a {\bf csc} Bergman metric then $\partial\Omega$ is locally spherical near $p$, namely, a small open piece of 
$\partial\Omega$ is CR-diffeomorphic to an open piece of the boundary of the unit ball $\mathbb{B}^n$. 
\end{theorem}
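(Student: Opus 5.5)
We propose to work near $p$ with the Fefferman asymptotic expansion of the Bergman kernel and to extract the leading obstruction term in $\Delta_{g_\Omega}\log J_\Omega$. After a local biholomorphic change of coordinates we may take $\partial\Omega$ near $p$ in Chern--Moser normal form, with a Fefferman defining function $\rho$ (approximate solution of the complex Monge--Amp\`ere equation $J(\rho)=1+O(\rho^{n+1})$). Localization of the Bergman kernel at a strongly pseudoconvex point holds even for unbounded pseudoconvex $\Omega$; by Hörmander's $L^2$ estimates it is used as in \cite{HJL25}. It then gives
\[
K_\Omega=\frac{c_n^{-1}\,\phi}{\rho^{n+1}}+\psi\log(-\rho) ,\qquad \phi,\psi\in C^\infty(\overline\Omega\cap U),
\]
with $\phi|_{\partial\Omega}=1$ in suitable normalization. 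Here the boundary values of $\phi$ modulo $\rho^{n+1}$ are locally determined by local CR invariants. Following Graham and Hirachi, for $n\ge 3$ the first nontrivial local invariant in the expansion of $\phi$ is a constant multiple of $\|S\|^2$, the squared norm of the Chern--Moser curvature tensor. It appears at order $\rho^2$: $\phi=1+c\,\|S\|^2\rho^2+O(\rho^3)$ with $c\neq0$ when $n\ge3$. For $n=2$ this term vanishes, and that is exactly why the hypothesis $n\ge3$ is needed.

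Next we compute $\log J_\Omega$ from this expansion. We write $\log K_\Omega=-(n+1)\log(-\rho)+\log\phi+\dots$, so that $G_\Omega$ is a perturbation of the complete Kähler metric $g_0=-(n+1)\partial\bar\partial\log(-\rho)$. We expand $\det G_\Omega/K_\Omega$, using $J(\rho)=1+O(\rho^{n+1})$ to kill the leading terms. The result should be
\[
\log J_\Omega=\log c_n + a_n\|S\|^2\rho^2+O(\rho^3),
\]
with an explicit nonzero constant $a_n$ for $n\ge 3$. This is the step I expect to be the main obstacle: tracking how the $\rho^2$ term of $\phi$ propagates through $\det\partial\bar\partial\log$, including cancellations, and confirming that $a_n\neq0$. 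I would first try the approach of \cite{KYu96}-type computations, or Hirachi's invariant-theoretic description of $J$.

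Then we apply the Laplacian of $g_0$, which to leading order acts on functions of the form $f\rho^k$ (with $f$ smooth up to the boundary) by
\[
\Delta_{g}(f\rho^{k})=\frac{k(k-n)}{n+1}f\rho^{k}+O(\rho^{k+1}),
\]
the indicial roots being $0$ and $n$. For $k=2<n$, i.e. $n\ge3$, the factor $2(2-n)/(n+1)$ is nonzero. The csc condition $\Delta_{g_\Omega}\log J_\Omega=0$ from Sha's identity therefore forces $a_n\|S\|^2=0$ on $\partial\Omega$ near $p$. We must also check that the error terms (including the log term, which contributes only at order $\rho^{n+1}\log$) do not interfere at order $\rho^2$. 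This is a straightforward bookkeeping step, since $n+1>2$.

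Finally, $\|S\|^2\equiv0$ near $p$ means that the Chern--Moser tensor vanishes. Because $n\ge3$ (CR dimension $\ge2$), the Chern--Moser theorem then gives that $\partial\Omega$ is locally spherical near $p$, i.e. locally CR-diffeomorphic to an open piece of $\partial\mathbb B^n$.
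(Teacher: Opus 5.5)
Your outline follows the paper's proof: localization plus Fefferman's expansion, identifying the $\rho^2$-coefficient $a$ of $\phi$ with a nonzero multiple of $\|S\|^2$ (the paper cites Christoffers: $a(p)=0$ iff $p$ is umbilic), a $\rho^2$-term in $\log J_\Omega$ proportional to $a$, the indicial factor $\frac{2(2-n)}{n+1}$ (which is what the paper's limit computation via Engli\v{s}'s formulas amounts to), and Chern--Moser. The gap is the step you defer: that the $\rho^2$-coefficient of $\log J_\Omega$ is a \emph{nonzero} multiple of that of $\phi$. You only assert this (``the result should be''), and it is the crux of the proof, not a detail. $J_\Omega$ is built from $\partial\bar\partial\log K_\Omega$, and the leading orders cancel because $J(\rho)=1+O(\rho^{n+1})$. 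If the $\rho^2$-term of $\phi$ also cancelled, harmonicity of $\log J_\Omega$ would say nothing about umbilicity. The paper supplies this step with Fefferman's exact identity $J_\Omega=c_nJ_{MA}(u)$, $u=(\frac{\pi^n}{n!}K_\Omega)^{-1/(n+1)}$, together with Martin's expansion $J_{MA}(u)=1-3\frac{n-1}{n+1}a\rho^2+o(\rho^2)$. The authors single this out as the new ingredient.

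You can close the gap with your own indicial formula. Write $g_0=-(n+1)\partial\bar\partial\log(-\rho)$ and $\log K_\Omega=\mathrm{const}-(n+1)\log(-\rho)+\log\phi+(\text{log-term})$, so that $G_\Omega=g_0+\partial\bar\partial\log\phi+\cdots$. Because $g_0^{-1}=O(\rho)$ and $g_0^{-1}\partial\rho=O(\rho^2)$, one has $g_0^{-1}\partial\bar\partial(a\rho^2)=O(\rho^2)$ in operator norm. Hence $\log\det G_\Omega=\log\det g_0+\Delta_{g_0}\log\phi+O(\rho^3)$. The identity $J(u)=u^{n+1}\det(-\partial\bar\partial\log u)$ applied to $u=-\rho$ gives $\log\det g_0=\mathrm{const}-(n+1)\log(-\rho)+O(\rho^{n+1})$. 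Therefore
\[
\log J_\Omega=\mathrm{const}+(\Delta_{g_0}-1)\log\phi+O(\rho^3)=\mathrm{const}+\Bigl(\frac{2(2-n)}{n+1}-1\Bigr)a\rho^2+O(\rho^3)=\mathrm{const}-\frac{3(n-1)}{n+1}a\rho^2+O(\rho^3),
\]
which is exactly Martin's formula, so your $a_n=-\frac{3(n-1)}{n+1}c\neq0$. One more point for your ``bookkeeping'' step: pointwise $O(\rho^3)$ is not enough. You need the remainder to have the form $\rho^3h+(\rho^{n+1}\log(-\rho))(\cdots)$ with $h$ smooth up to the boundary, as in Engli\v{s}'s expansion used by the paper. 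Then $\Delta_{g_\Omega}$ of the remainder is $o(\rho^2)$, and $\Delta_{g_\Omega}$ may be replaced by $\Delta_{g_0}$ at order $\rho^2$.
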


\section{Proof of Theorem \ref{main}}

The proof is similar to that of the corresponding known result (see, e.g., \cite{HX21,HL23}), where the csc condition is replaced by the slightly stronger K\"ahler--Einstein condition. A new observation here is the use of a recently obtained formula of Martin for the expansion of $J_\Omega$ \cite{Ma21}, in addition to Christoffers's formula for the Bergman expansion \cite{Ch81}. We also use several asymptotic expansion formulas of Engliš \cite{Eng08} to simplify the computation.
\begin{proof}
Let $G\subset\Omega$ be a small smoothly bounded  strongly pseudoconvex domain with $U\cap G=U\cap \Omega$ for some neighborhood $U$ of the strongly pseudoconvex boundary point $p$ in $\mathbb C^n$. By localization of Bergman kernels (see \cite{HHL26}), we have
\begin{equation}\label{localization}
    K_\Omega(z, z)=K_G(z, z)+\varphi(z),
\end{equation}
where $\varphi(z)\in C^\infty(U\cap\overline G)$. Let $\rho\in C^\infty(\overline G)$ be a positively signed Fefferman defining function for $G$, namely, $J_{MA}(\rho)=1+O(\rho^{n+1})$ with the Monge-Ampere operator $J_{MA}(\rho)$ defined in the following (\ref{MA}). With respect to such a $\rho$, then we have  the Fefferman expansion of $K_G(z, z)$ as follows:
\begin{equation}\label{expansion}
    K_G(z, z)=\frac{\phi}{\rho^{n+1}}+\psi\log\rho
\end{equation}
with $\phi, \psi\in C^\infty(\overline G)$ and $\phi=\frac{n!}{\pi^n}+O(\rho^2)$. It follows from (\ref{localization}) and (\ref{expansion}) that 
\begin{equation}\label{AP}
    K_{\Omega}(z, z)=\frac{\phi+\psi\rho^{n+1}\log\rho+\varphi\rho^{n+1}}{\rho^{n+1}}=\frac{\Phi}{\rho^{n+1}},
\end{equation}
where $\Phi=\phi+\psi\rho^{n+1}\log\rho+\varphi\rho^{n+1}\in C^n(U\cap\overline G)$. Moreover, $\Phi=\phi+o(\rho^n)$ on $U\cap\overline G$.
Let $a, b$ be smooth functions on $\overline G$ such that 
\begin{equation*}
    \phi=\frac{n!}{\pi^n}(1+a\rho^2+b\rho^3+O(\rho^4)).
\end{equation*}
Here, $a$ is uniquely determined up to $O(\rho^3)$ near $p$. Then by a result of Christoffers \cite{Ch81}, $a(p)=0$ if and only if $p$ is a CR umbilic point of $\partial G$ in the sense of Chern-Moser \cite{CM74}. Write $u_{\Omega}=(\frac{\pi^n}{n!}K_{\Omega}(z, z))^{\frac{-1}{n+1}}, z\in\Omega^\ast$ and $u_G=(\frac{\pi^n}{n!}K_{G}(z, z))^{\frac{-1}{n+1}}, z\in G$. For any real-valued  $C^2$-smooth function $u$, define
\begin{equation}\label{MA}
J_{MA}(u)=(-1)^n\det\left[\begin{array}{cc}
     u & u_{\overline\beta}  \\
     u_{\alpha}&  u_{\alpha\overline\beta}
\end{array}\right]
\end{equation}
called  the Fefferman-Monge-Ampere operator. By a result of Martin \cite[Page 93]{Ma21}, 
we have
$$J_{MA}(u_{G})=1-3\frac{n-1}{n+1}a\rho^2+o(\rho^2)~\text{on} ~U\cap \overline G.$$
On the other hand, applying the following Fefferman formula \cite{Ma21} 
\begin{equation}\label{Fefferman formula}
    J_{G}=\frac{(n+1)^n\pi^n}{n!} J_{MA}(u_{G}):=c_n J_{MA}(u_{G}) \ \ \hbox{where  } c_n=\frac{(n+1)^n\pi^n}{n!}
\end{equation}
 we have
\begin{equation}
    J_G(z)=c_n-3c_n\frac{n-1}{n+1}a\rho^2+o(\rho^2).
\end{equation}
 
Again by the Fefferman formula, we verify that 
$J_\Omega=J_G+O(\rho^3)$ near $p$. Thus 
\begin{equation}
    J_{\Omega}(z)=c_n-3c_n\frac{n-1}{n+1}a\rho^2+o(\rho^2)~\text{near }p.
\end{equation}

Since $J_\Omega(z)$  is independent of the chosen defining function and two defining functions differ by a positive smooth function in a neighborhood of $p$, we may assume, without loss of generality,  in the following computation that $\rho$ is strongly plurisubharmonic near $p$ to prove $a(p) = 0$.
By (\ref{AP}), we have the following expansion of the Bergman canonical invariant function from \cite{Eng08} (Engliš stated this result for strongly pseudoconvex domains, with (\ref{AP}),  however, his proof carries over without any change to our local setting):

\begin{equation}
\begin{split}
    J_{\Omega}(z)&\sim\sum_{j=0}^\infty(\rho^{n+1}\log\rho)^j\eta_j, \eta_j\in C^\infty(U\cap\overline G)\\
    &    \sim\eta_0+\eta_1\rho^{n+1}\log\rho+\cdots.
    \end{split}
\end{equation}
Here, the sum is in the asymptotic sense, that is, for any $k\in\mathbb N$, the difference $$J_\Omega(z)-\sum_{j=0}^{k-1}(\rho^{n+1}\log\rho)^j\eta_j\in C^{k(n+1)-1}(U\cap\overline G)$$ and vanishes on $U\cap\partial G$ with all its partial derivatives of orders $\leq k(n+1)-1$. 
Then we can write 
\begin{equation}
   \log( J_{\Omega}(z))=\log(c_n)-3\frac{n-1}{n+1}a(z)\rho^2+b(z)\rho^2
\end{equation}
where $b(z)\in C^\infty(U\cap G)\cap C^{1,\frac{1}{2}}(U\cap\overline G)$ and 
\begin{equation}\label{5-24-a1}
b(z)\sim a_1(z)\rho+\rho^{-2}\sum_{j=1}^\infty \tilde \eta_{j}(\rho^{n+1}\log\rho)^j,
\end{equation}
with $\tilde \eta_k(z)\in C^\infty(U\cap\overline G)$ for $k\ge 1.$

Since 
 $\Delta_{g_\Omega}\log J_{\Omega}=0$,  we have 
\begin{equation}\label{5-23-a3}
\sum_{i, j=1}^ng^{\overline ji}\frac{\partial^2}{\partial z_i\partial\overline z_j}[3a\frac{n-1}{n+1}\rho^2- b\rho^2]\equiv 0~\text{on} ~U\cap G.
\end{equation}

By direct calculation,
\begin{equation}\label{5-23-a1}
\begin{split}
     g^{\overline ji}\frac{\partial^2}{\partial z_i\partial\overline z_j}(a\rho^2)=&\rho^2 g^{\overline j i}\frac{\partial^2 a}{\partial z_i\partial\overline z_j}+2\rho g^{\overline j i}\left(\frac{\partial a}{\partial z_i}\frac{\partial\rho}{\partial \overline z_j}+\frac{\partial a}{\partial \overline z_j}\frac{\partial \rho}{\partial z_i}\right)+\\
    &2a g^{\overline j i}\frac{\partial \rho}{\partial\overline z_j}\frac{\partial\rho}{\partial z_i}+2a\rho g^{\overline ji}\frac{\partial^2 \rho}{\partial z_i\partial\overline z_j}.
    \end{split}
\end{equation}
Since we can write $K_{\Omega}=\rho^{-(n+1)}[\widetilde \eta_0+(\rho^{n+1}\log\rho)\widetilde\eta_1],$
where $\widetilde\eta_0=\phi+\varphi\rho^{n+1}$ and $\widetilde\eta_1=\psi$, from  \cite[Theorem 3]{Eng08}, we have 
\begin{equation}
    \rho^{-1} g^{\overline j i}\in C^n(U\cap\overline G).
\end{equation}
Moreover, from \cite[Section 3]{Eng08} we have in  a small neighborhood $U_p\subset {\mathbb C}^n$ of $p$
\begin{equation}\label{5-23-a2}
\begin{split}
    &\rho^{-1}g^{\overline j i}=\rho^{-1}[\log \rho]^{\overline j l}H^i_l, H^i_l\in C^n(U_p\cap\overline G), H^i_l|_{\partial G\cap U_p}=-\frac{1}{n+1}\delta^i_l.\\
    &\frac{1}{\rho^2}[\log\rho]^{\overline j i}\rho_i, ~~\frac{1}{\rho^2}[\log\rho]^{\overline j i}\rho_{\overline j}\in C^\infty(\overline G\cap U_p),~[\log \rho]^{\overline j i}\rho_{\overline j}\rho_i=\rho [\log \rho]^{\overline j i}\rho_{i\overline j}-n\rho^2~\text{on}~U_p\cap\overline G.\\
    &\lim_{z\rightarrow p} \frac{1}{\rho^2}[\log \rho]^{\overline j i}\rho_{\overline j}\rho_i=-1.
    \end{split}
\end{equation}
Replacing $a(z)$ by $b(z)$ from (\ref{5-24-a1}) in (\ref{5-23-a1}) and from (\ref{5-23-a2}) we have that
\begin{equation}
    \lim_{z\rightarrow p}\frac{1}{\rho^2}\sum_{i, j=1}^n g^{\overline ji}\frac{\partial^2}{\partial z_i\partial\overline z_j}(b\rho^2)=0.
\end{equation}
It follows from (\ref{5-23-a3}), (\ref{5-23-a1}) and (\ref{5-23-a2}) that 
\begin{equation}
    0=\lim_{z\rightarrow p}\frac{1}{\rho^2}\sum_{i, j=1}^n g^{\overline ji}\frac{\partial^2}{\partial z_i\partial\overline z_j}(a\rho^2)=2a(p)(n-2).
\end{equation}
Since $n\geq 3$, we have $a(p)=0$ and thus an arbitrary given strongly pseueoconvex point $p$ is a CR umbilic point of $\partial\Omega$.
It follows immediately from the Chern-Moser Lemma \cite{CM74} that $\partial\Omega$ is spherical near $p$.
\end{proof}

\section{Remarks}

%We make the following more remarks in this section.
\begin{remark}\label{rem1}
    For $m>1$ an integer  and $n\geq 2$, let $$\mathcal E_m:=\left\{z\in\mathbb C^{n-1}\times \mathbb C:\sum_{j=1}^{n-1}|z_j|^2+|z_n|^{2m}<1\right\}.$$ Then we claim that the Bergman metric of $\mathcal E_m$ can not have constant scaler curvature.
\end{remark}
\begin{proof}[Proof of the statement in Remark \ref{rem1}]
    We denote by  $S$  the scalar curvature of the Bergman metric of $\mathcal E_m$. If $\mathcal E_m$ has a csc Bergman metric, then $S\equiv -n$ on $\mathcal E_m$ by \cite[Corollary 2]{KYu96} as it has limit $-n$ at any strongly pseudoconvex point. From \cite {KYu96}, 
    \begin{equation}\label{scaler}
    S(0)=n(n+1)-4a_0\sum_{j=1}^n\frac{b_{jj}}{a_j^2}-a_0\sum_{j\neq k}\frac{b_{jk}}{a_ja_k}.
    \end{equation}
    Here, $a_0=\frac{1}{vol(\mathcal E_m)}$, $a_j=\frac{1}{\|z_j\|_{\mathcal E_m}^2}$, $b_{jk}=\frac{1}{\|z_jz_k\|^2_{\mathcal E_m}}$ with $\|\cdot\|$ the $L^2$-norm on $\mathcal E_m$.
    By direct calculations, 
    \begin{equation}\label{5-28-a1}
\begin{aligned}
a_0=\frac{m\Gamma\!\left(n + \frac{1}{m}\right)}{\pi^{n} \; \Gamma\!\left(\frac{1}{m}\right)},~
a_1=\dots=a_{n-1}=\frac{m\Gamma\!\left(n +1+ \frac{1}{m}\right)}{\pi^{n} \; \Gamma\!\left(\frac{1}{m}\right)},~ 
a_n=\frac{m\;\Gamma\!\left(n+\frac{2}{m}\right)}{\pi^{n} \; \Gamma\!\left(\frac{2}{m}\right)}
\end{aligned}
\end{equation}
and 
\begin{equation}\label{5-28-a2}
\begin{split}
&b_{jj}=\dfrac{m}{2\pi^n}\,\dfrac{\Gamma\!\left(n+2+\frac1m\right)}{\Gamma\!\left(\frac1m\right)},~ j=1,\dots,n-1; ~b_{nn}=\dfrac{m}{\pi^n}\,\dfrac{\Gamma\!\left(n+\frac3m\right)}{\Gamma\!\left(\frac3m\right)},\\
&b_{jk}=\dfrac{m}{\pi^n}\,\dfrac{\Gamma\!\left(n+2+\frac1m\right)}{\Gamma\!\left(\frac1m\right)},~  j\neq k,\; j,k=1,\dots,n-1,\\
&b_{jn}=b_{nj}=\dfrac{m}{\pi^n}\,\dfrac{\Gamma\!\left(n+1+\frac2m\right)}{\Gamma\!\left(\frac2m\right)},  j=1,\dots,n-1
\end{split}
 \end{equation}   
Substituting (\ref{5-28-a1}) and (\ref{5-28-a2}) to (\ref{scaler}) we have
\begin{equation}
S(0)=2-\frac{(n-1)(n+\frac 2m)}{n+\frac 1m}
-4\,\frac{\Gamma\!\left(n+\frac1m\right)\,\
\Gamma\!\left(n+\frac3m\right)\Gamma\!\left(\frac2m\right)^2\,}{\Gamma\!\left(\frac1m\right)\,\Gamma\!\left(\frac3m\right)\,
\Gamma\!\left(n+\frac2m\right)^2}.
\end{equation}
Set $a=\frac{1}{m}\in(0, 1)$, $H_n(a)
=
\frac{
\Gamma(n+3a)\Gamma(n+a)\Gamma(2a)^2
}{
\Gamma(3a)\Gamma(a)\Gamma(n+2a)^2
}=\prod_{j=0}^{n-1}
\frac{(j+a)(j+3a)}{(j+2a)^2}$ and $T_n(a)
=\frac{3n+(4-n)a}{4(n+a)}.$ Then 
\begin{equation}
\begin{split}
    S(0)+n=4(T_n(a)-H_n(a)).
    \end{split}
\end{equation}
By direct calculation $T_2(a)<H_2(a)$.  Since
$H_{n+1}(a)=H_n(a)
\frac{(n+a)(n+3a)}{(n+2a)^2}$ and 
$$
T_n(a)\frac{(n+a)(n+3a)}{(n+2a)^2}-T_{n+1}(a)=\frac{an(1-a)^2}{4(n+2a)^2(n+1+a)}.
$$
Since $0<a<1$ and $n\geq 2$, the right-hand side is strictly positive,
hence
\[T_n(a)\frac{(n+a)(n+3a)}{(n+2a)^2}>T_{n+1}(a).
\]
Assume inductively that
\[H_n(a)>T_n(a).\]
Then
\[H_{n+1}(a)=H_n(a)\frac{(n+a)(n+3a)}{(n+2a)^2}>T_n(a)\frac{(n+a)(n+3a)}{(n+2a)^2}>T_{n+1}(a).\]
Thus
\[H_n(a)>T_n(a)\]
for every $n\ge2$.
Consequently,
\[S(0)+n=4(T_n(a)-H_n(a))<0.\]
Therefore, $S(0)<-n$ and thus we get a contradiction.
\end{proof}

\begin{remark}
We remark that Theorem~\ref{main}, together with Remark~\ref{rem1},
allows us to restate some of the results in \cite{HHL26} 
% Theorem~1.1 and Corollary~1.4 of \cite{HHL26}
in a slightly different form. For instance,
Theorem~1.1 and Corollary~1.4 of \cite{HHL26} can now be stated as:  A bounded real analytic pseudoconvex domain  
or
a smoothly bounded  convex domain of finite D'Angelo type   
admits a {\bf csc} Bergman metric if and only if it is biholomorphic to the ball.
Indeed, the K\"ahler--Einstein condition in \cite{HHL26} is used
only to ensure sphericity at strongly pseudoconvex points; 
and the proof of Theorem~3.3 in \cite{HHL26} carries over verbatim to the
{\bf csc} Bergman metric setting by using the formula~$N_\Omega$ in Proposition~2.1~(iv) of \cite{KYu96} in place of~$\lambda_\Omega$, and by observing that the same localization 
results in Proposition~2.4 of \cite{KYu96} hold  for unbounded pseudoconvex domains as in \cite{HJL25}.
\end{remark}

{\bf Acknowledgement}. In a personal communication, M. Xiao informed us that he and his coauthors 
have a work in progress in which, among other things, they also obtain results closely related to Theorem 1.1. The  authors thank  Song-Ying Li for several  helpful discussions.

\end{document}